\newtheorem{teo}{Theorem}
\newtheorem{prop}{Proposition}
\newtheorem{coro}{Corollary}
\newtheorem{note}{Remark}
\def\@seccntformat#1{\@ifundefined{#1@cntformat}%
{\csname the#1\endcsname\quad}
{\csname #1@cntformat\endcsname}
}
\def\section@cntformat{\thesection. \quad}
\def\subsection@cntformat{\thesubsection . \quad}
\begin{document}

\def \ni {\noindent}
\allsectionsfont{\mdseries\itshape\centering}

\def\restriction#1#2{\mathchoice
              {\setbox1\hbox{${\displaystyle #1}_{\scriptstyle #2}$}
              \restrictionaux{#1}{#2}}
              {\setbox1\hbox{${\textstyle #1}_{\scriptstyle #2}$}
              \restrictionaux{#1}{#2}}
              {\setbox1\hbox{${\scriptstyle #1}_{\scriptscriptstyle #2}$}
              \restrictionaux{#1}{#2}}
              {\setbox1\hbox{${\scriptscriptstyle #1}_{\scriptscriptstyle #2}$}
              \restrictionaux{#1}{#2}}}
\def\restrictionaux#1#2{{#1\,\smash{\vrule height .8\ht1 depth .85\dp1}}_{\,#2}} 

\title{Weber's formula for the bitangents of a smooth plane quartic}
\author{Alessio Fiorentino\thanks{This work was supported by the Centre Henri Lebesgue (Programme PIA- ANR-11-LABX-0020-01)}}
\date{}
\maketitle

\begin{abstract}
\ni In a section of his 1876 treatise {\it Theorie der Abel’schen Functionen vom Geschlecht $3$} Weber proved a formula that expresses the bitangents of a non-singular plane quartic in terms of Riemann theta constants ({\it Thetanullwerte}). The present note is devoted to a modern presentation of Weber's formula. In the end a connection with the universal bitangent matrix is also displayed.
\end{abstract}

\medskip

\medskip

\section{Introduction}
The problem of characterizing the complex principally polarized abelian varieties of dimension $g$ which are Jacobian varieties of smooth projective curves of genus $g$, is a long-standing research subject that dates back to Riemann and Schottky.
In fact, the question can be simply answered whenever $g\leq3$, as in this case every indecomposable principally polarized abelian variety is known to be the Jacobian variety of an irreducible smooth projective curve (uniquely determined up to isomorphisms). A naturally related question is how to explicitly recover the curve from a given principally polarized abelian variety. A solution to this problem for non-hyperelliptic curves of genus $3$ has been found in \cite{Guardia2}, where an equation for the curve is recovered by resorting to the Riemann model of the curve associated with Steiner complexes of bitangents. One of the first mathematicians who succeeded in establishing a link between the geometry of the curve and the algebraic structure defined by its period matrix was Heinrich Martin Weber. In his work \cite{Weber} he actually provided both a formula to recover the bitangents of the the curve from its period matrix and a reverse formula to recover the fourth powers of theta constants (\emph{Thetanullwerte}), valued at the point corresponding to the period matrix of the curve, from its bitangents. A detailed explanation of the latter formula, along with a modern proof of it, can be found in \cite{NR}; instead, this note aims to prove Weber's formula for the bitangents of the curve from a modern point of view.

\section{Acknowledgments}
The author wishes to thank Christophe Ritzenthaler for bringing Weber's work to his attention as well as for all the enlightening discussions. He is also grateful to Riccardo Salvati Manni for the several conversations on the subject.

\section{Quadratic forms on symplectic vector spaces over $\mathds{F}_2$}

This brief section is devoted to outlining some of the basic elements concerning the theory of quadratic forms over the finite field $\mathds{F}_2$ and is motivated by the need for a coordinate-free presentation of Weber's formula; a more detailed explanation of the subject can be found in Dolgachev's book \cite{Dolgachev} and in Gross and Harris's paper \cite{GH}.\\
\ni Let $g \geq 1$ be an integer and $V$ a vector space of dimension $2g$ over $\mathds{F}_2$ provided with a symplectic form $\omega$. A \emph{quadratic form} $q$ on the symplectic vector space $(V, \omega)$ is a map $q: V \mapsto \mathds{F}_2$ such that:
\begin{equation*}
\begin{array}{l}
q(\lambda v) = \lambda^2 q(v) \quad \quad \forall \lambda \in \mathds{F}_2 , \, \forall v \in V \\
q(v+w) = q(v) + q(w) +  \omega (v,w) \quad \quad \forall v, w \in V;
\end{array}
\end{equation*}
There are $2^{2g}$ distinct quadratic forms on $(V, \omega)$. By definition $q, q` \in Q(V)$ if and only if $q`-q=\alpha^2$ where $\alpha$ is a linear form on $V$ \footnote{Such a linear form is well defined, as any element of $\mathds{F}_2$ has exactly one square root, which actually coincides with the element itself.}; therefore, for any $q, q` \in Q(V)$ there exists a unique $v \in V$ such that:
\begin{equation}
\label{sumvector}
q`(w) = q(w) +  {\omega(v,w)} \quad \quad \forall w \in V;
\end{equation}
Thanks to (\ref{sumvector}), a free and transitive action of $V$ on $Q(V)$ is well defined by setting $v + q := q'$, hence the set $Q(V)$ is an affine space over $V$, which means it can be identified with $V$ whenever a quadratic form is fixed as origin. Furthermore, the disjoint union $V  \dot{\cup} Q(V)$ can be thought as a vector space of dimension $2g + 1$ over $\mathds{F}_2$ . Once a symplectic basis $e_1, \dots , e_g, f_1, \dots , f_g$ is chosen for $V$, a quadratic form $q_0$ is naturally defined as origin for the affine space $Q(V)$:
\begin{equation}
\label{q0}
q_0(w): = \lambda \cdot \mu = \lambda_1 \mu_1 + \cdots  \lambda_g \mu_g  \quad \quad \forall \, w = (\lambda, \mu)= \sum_{i=1}^g\lambda_ie_i + \sum_{i=1}^g\mu_if_i 
\end{equation}
Then, by (\ref{sumvector}), each $q \in Q(V)$ can be identified with the unique column vector $v=[{}^{m'}_{m''}]$ such that:
\begin{equation}
\label{qintermsofq0}
q(w) = \lambda \cdot \mu + \lambda \cdot m' + m'' \cdot \mu \quad \quad \forall \, w = (\lambda, \mu)
\end{equation}
\ni Besides, since the subgroup of $GL(V)$ that preserves the symplectic form $\omega$ is isomorphic to $SP(2g, \mathds{F}_2)$, an action of $SP(2g, \mathds{F}_2)$ on the affine space $Q(V)$ is well defined by setting $\gamma \cdot q(v) := q({\gamma}^{-1} v)$ for any $v \in V$. The orbits of $Q(V)$ under this action are described in terms of the \emph{Arf invariant} of a quadratic form: 
\begin{equation*}
a(q) := \sum_{i=1}^g q(e_i)q(f_i) \quad \quad \forall q \in Q(V);
\end{equation*}
which does not depend on the choice of the symplectic basis. Then, $Q(V)$ is seen to decompose into two orbits: the set $Q(V)_+$ of \emph{even} quadratic forms, namely those whose Arf invariant is equal to $0$ (the cardinality of this orbit is equal to $2^{g-1}(2^g + 1)$) and the set $Q(V)_-$ of \emph{odd} quadratic forms, namely those whose Arf invariant is equal to $1$ (the cardinality of this orbit is equal to $2^{g-1}(2^g - 1))$. A straightforward computation shows that the quadratic form $q_0$ defined in (\ref{q0}) is even and that $a(q) = m' \cdot m''$ for any quadratic form $q$ whose coordinates with respect to $q_0$ are $[{}^{m'}_{m''}]$, as in (\ref{qintermsofq0}).
\ni Remarkable orbits of non-ordered collections of quadratic forms are also characterized in terms of the Arf invariant; in particular, a triple $q_1, q_2, q_3 \in Q(V)$ is called \emph{syzygetic} (resp. \emph{azygetic}) if $a(q_1) + a(q_2) + a(q_3)+ a(q_1+q_2+q_3) = 0$ (resp. $=1$). Likewise, a collection of quadratic forms $q_1, \dots , q_n \in Q(V)$ with $n \geq 4$ is called syzygetic (resp. azygetic) if each sub-triple $\{q_i, q_j, q_k\} \subset \{q_1,  \dots , q_n\}$  is syzygetic (resp. azygetic). An \emph{Aronhold system} is a collection of $2g + 1$ quadratic forms $q_1,  \dots , q_{2g+1} \in Q(V)$ which is a basis for the vector space $V \dot{\cup} Q(V)$ and such that for any $q =\sum_{i=1}^{2g+1}\lambda_iq_i \in Q(V)$ the following expression holds:
\begin{equation*}
a(q) = \frac12 \left ( \sum_{i=1}^{2g+1}\lambda_i - 1 \right ) + \left \{ \begin{array}{lll} 0 & \text{if} & g \equiv 0, 1 \, \text{mod} \, 4 \\ 1 & \text{if} & g \equiv 2, 3 \, \text{mod} \, 4 \end{array} \right .
\end{equation*} Aronhold systems exist and the action of $SP(2g, \mathds{F}_2)$ on them is transitive.

\section{Theta characteristics and quadratic forms}
This section is intended to recall the link between the above mentioned algebraic settings and the geometry of the projective curves. Classical references for this subject are \cite{ACGH} and \cite{GHprinciples}. We will also follow the exposition outlined in \cite{Guardia} and \cite{NR}.\\
Let $C$ be a smooth complex non-hyperelliptic curve of genus $g$ canonically embedded in $\mathds{P}^{g-1}$ by means of a basis $\omega_1, \dots , \omega_g$ of the cohomology space $H^0(C, \Omega^1)$, and let $\mathfrak{S}_g$ denote the Siegel upper-half space of degree $g$, namely the tube domain of complex symmetric $g \times g$ matrices with positive definite imaginary part. Once a symplectic basis $\delta_1, \dots , \delta_g, \delta'_1, \dots, \delta'_g$ of the homology space $H_1(C, \mathds{Z})$ is chosen, the $g \times 2g$ period matrix of the curve $(\int_{\delta_j}\omega_i, \int_{\delta'_j}\omega_i)$ defines a lattice in $\mathds{C}^g$ and consequently a complex torus whose isomorphism class has a representative of the form $J_C:= \mathds{C}^g/(\mathds{Z}^g + \tau\mathds{Z}^g)$ with $\tau \in \mathfrak{S}_g$. This complex torus is known as the Jacobian variety of the curve $C$ and is a principally polarized abelian variety, whose set of  $2$-torsion points $J_C[2]$ can be clearly identified with the set of the representatives $\frac12 (h + \tau \cdot k)$ with $h,k \in \mathds{Z}_2^g$. Hence, $J_C[2]$ admits a vector space structure over $\mathds{Z}_2$ and is furthermore endowed with a symplectic form given by the Weyl pairing.
The affine space $Q(J_C[2])$ of the quadratic forms on $J_C[2]$ can be therefore identified with $\mathds{Z}_2^{g} \times \mathds{Z}_2^{g}$, once a quadratic form is fixed as origin. This affine space is strictly related to the geometry of the curve, since its points can be identified with the so-called theta characteristics. A \emph{theta characteristic} on $C$ is a divisor $D$ such that $2D \sim K_C$ where $K_C$ is the canonical divisor of the curve. Once a point $P_0 \in C$ is fixed, the well known Abel-Jacobi map $\phi_{P_0}: Div(C) \rightarrow J_C$ is defined on the group $Div(C)$ of the equivalence classes of divisors; since $p=\phi_{P_0}(D'-D) \in J_C[2]$ whenever the divisors $D$ and $D'$ are linearly equivalent to distinct theta characteristics, a free transitive action of the vector space $J_C[2]$ is well defined on the set of theta characteristics as well, by setting $D+p:=D'$. Then, for any theta characteristic $D$ a quadratic form in $Q(J_C[2])$ is uniquely defined by setting:
\begin{equation*}
q_D(p):= [\text{dim}\mathfrak{L}(D+p) + \text{dim}\mathfrak{L}(D)] \, \text{mod} 2
\end{equation*}
where $\mathfrak{L}(D)$ and $\mathfrak{L}(D+p)$ are the Riemann-Roch spaces respectively associated with the divisors $D$ and $D+p$. This gives a bijection between the set of theta characteristics and $Q(J_C[2])$. The theta characteristics can be therefore identified with the vectors in $\mathds{Z}_2^{g} \times \mathds{Z}_2^{g}$ as long as a theta characteristic $D_0$ is fixed; a canonical choice for such a $D_0$ is suggested by Riemann's theorem on the geometry of the theta divisor, as we will briefly recall.\\
\\
\ni A {\it Riemann theta function} of level $2$ with characteristic $m=({}^{m'}_{m''})$, where $m', m'' \in \mathds{Z}^g$ is a holomorphic function  $\theta_{m}: \mathfrak{S}_g \times \mathds{C}^g \mapsto \mathds{C}$ defined by the series:
\begin{equation*}
\theta_{m}(\tau , z) \coloneqq \sum_{n \in \mathds{Z}^g} {\bf e}\left [ {}^t \left ( n + \frac{m'}{2} \right ) \cdot \tau \cdot \left ( n + \frac{m'}{2} \right ) + 2 \left ( n + \frac{m'}{2} \right ) \cdot \left ( z + \frac{m''}{2} \right ) \right ]
\end{equation*} where ${\bf e}(z):= \exp(\pi {\bf i}z)$ and the symbol $\cdot$ stands for the usual inner product.  As a consequence of the {\it reduction formula}:
\begin{equation}
\label{reduction}
\theta_{m+2n}(\tau , z) = (-1)^{m' \cdot n''} \theta_{m} (\tau , z ) \quad \quad \forall m=({}^{m'}_{m''}),  \quad \forall n=({}^{n'}_{n''})
\end{equation} these functions are uniquely determined up to a sign by the so-called reduced characteristics $[m]:=[{}^{m'}_{m''}]$ with $m', m'' \in \mathds{Z}_2^{g}$. The {\it theta constant (Thetanullwert)} with characteristic $m$ is the function defined by setting $\theta_m(\tau):=\theta_m(\tau, 0)$. Riemann theta functions with characteristics satisfy the classical {\it addition formula} (cf. \cite{Igusabook} for a general formulation in terms of real characteristics):
\begin{equation}
\label{addition}
\begin{split}
\theta_{m_1}(\tau, u+v)\theta_{m_2}(\tau, u-v)\theta_{m_3}(\tau)\theta_{m_4}(\tau)=\\
 =1/2^g\sum_{[a] \in  \mathds{Z}^{2g}/ 2\mathds{Z}^{2g}}{\bf e}(m'_1 \cdot a'') \theta_{n_1+a}(\tau, u) \theta_{n_2+a}(\tau, u)\theta_{n_3+a}(\tau, v)\theta_{n_4+a}(\tau, v) \\
\end{split}
\end{equation}
where the sum runs over a set of representatives for $\mathds{Z}^{2g}/ 2\mathds{Z}^{2g}$ and $\{ n_1, n_2, n_3, n_4 \}$ and $\{ m_1, m_2, m_3, m_4 \}$ are any two collections of four characteristics that satisfy the following identity:
\begin{equation*}
(n_1, n_2, n_3, n_4)=\frac{1}{2}(m_1, m_2, m_3, m_4) \cdot \begin{pmatrix} 1 & 1 & 1 & 1 \\ 1 & 1 & -1 & -1 \\ 1 & -1 & 1 & -1 \\ 1 & -1 & -1 & 1 \end{pmatrix}
\end{equation*}

\ni If $\tau \in \mathfrak{S}_g $ identifies the Jacobian variety $J_C$ of the curve, the \emph{theta divisor} $\Theta$ is well defined on $J_C$ as the pull-back of the divisor $\{z \in \mathds{C}^g \mid \theta_0(\tau) = 0 \}$, and the Chern class of the holomorphic line bundle associated with such a divisor gives a principal polarization on $J_C$. The following classical theorem holds:

\begin{teo}[\bf{Riemann's theorem}]
There exists a theta characteristic $D_0$ on the curve $C$ such that:
\begin{equation*}
W_{g-1} = \Theta + D_0
\end{equation*}
where $W_{g-1}:= \{D \in Div(C) \, \mid \text{deg}(D)=g-1 \, , \text{dim}\mathfrak{L}(D) >0 \}$. Furthermore,  $\text{dim}\mathfrak{L}(D)$ is even, and $\text{mult}_p(\Theta) = \text{dim}\mathfrak{L}(D+p)$ for any $p \in J_C[2]$

\end{teo}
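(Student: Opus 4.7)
The strategy is to split the statement into three substatements and attack them in turn: (a) $W_{g-1}$ is the image of $\Theta$ under translation by a divisor class $D_0 \in \mathrm{Pic}^{g-1}(C)$; (b) this $D_0$ is a theta characteristic; (c) the multiplicity formula $\mathrm{mult}_p(\Theta) = \dim \mathfrak{L}(D_0+p)$ for $p \in J_C[2]$ (with the parity statement as a byproduct).

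For (a) I would first argue that $W_{g-1}$ is an irreducible closed subvariety of $\mathrm{Pic}^{g-1}(C)$ of dimension $g-1$, since it is the image of the symmetric product $C^{(g-1)}$ under the Abel-Jacobi map and this map is birational onto its image by the general position theorem. The cohomology class of $W_{g-1}$ in $\mathrm{Pic}^{g-1}(C)$ coincides with that of a translate of $\Theta$ by Poincar\'e's formula $[W_d] = [\Theta]^{g-d}/(g-d)!$ applied with $d=g-1$. Since two irreducible effective divisors of the same cohomology class on an abelian variety differ only by translation, one obtains $W_{g-1} = \Theta + D_0$ for a unique $D_0 \in \mathrm{Pic}^{g-1}(C)$.

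To identify $D_0$ as a theta characteristic (step (b)) I would exploit a symmetry comparison. The theta divisor $\Theta \subset J_C$ is symmetric under $L \mapsto L^{-1}$, because $\theta_0(\tau, -z) = \theta_0(\tau, z)$. On the other hand, $W_{g-1} \subset \mathrm{Pic}^{g-1}(C)$ is symmetric under the Serre duality involution $L \mapsto K_C \otimes L^{-1}$, since $\dim \mathfrak{L}(D) = \dim \mathfrak{L}(K_C-D)$ when $\deg D = g-1$. Transporting the involution $L \mapsto L^{-1}$ on $J_C$ via translation by $D_0$ to $\mathrm{Pic}^{g-1}(C)$ yields the involution $M \mapsto 2D_0 \otimes M^{-1}$; comparing this with $M \mapsto K_C \otimes M^{-1}$ forces $2D_0 \sim K_C$.

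The multiplicity formula (c) is the core analytical input, and this is where I expect the main difficulty. I would invoke the Riemann-Kempf singularity theorem: writing $L = D_0 + p$ with $h^0(L) = r+1$, one shows that the tangent cone to $\Theta$ at $p$ is defined by the $(r+1) \times (r+1)$ minors of an explicit matrix built from the differentials of $\theta$, hence $\mathrm{mult}_p(\Theta) = r+1 = \dim \mathfrak{L}(D_0 + p)$. The restriction $p \in J_C[2]$ makes $D_0 + p$ again a theta characteristic, so the parity of $\dim \mathfrak{L}(D_0 + p)$ agrees with the Mumford-Atiyah invariant; combined with the evenness of $\mathrm{mult}_p(\Theta)$ at a point where $\theta$ is an even (resp.\ odd) function of $z$ at $p$, one recovers the parity assertion. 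The hard step is the determinantal description of the tangent cone, which I would cite from the classical literature rather than reproduce; all the remaining manipulations are formal once one accepts Poincar\'e's formula and Riemann-Kempf.
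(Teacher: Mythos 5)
The paper does not actually prove this theorem: it is quoted as a classical result (with \cite{ACGH} and \cite{GHprinciples} as the background references) and used as a black box, so there is no in-paper argument to compare against. Your plan is essentially the standard modern proof (as in ACGH or Mumford) and its architecture is sound, but it is worth noting that it differs from Riemann's original route, which establishes $W_{g-1}=\Theta+D_0$ directly from the Riemann vanishing theorem by analyzing the pullback $\theta(\phi(D)-\kappa)$ along the curve and counting its $g$ zeros; your substitute --- Poincar\'e's formula plus uniqueness of the effective divisor in a principal polarization class --- buys a cleaner, more abstract existence argument at the cost of importing Poincar\'e's formula, while the symmetry comparison in step (b) and the Riemann--Kempf input in step (c) are exactly the standard identifications of the Riemann constant and of the multiplicity.

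Two points deserve tightening. First, the assertion that ``two irreducible effective divisors of the same cohomology class on an abelian variety differ only by translation'' is false in general (for a polarization of type $(1,2)$ on an abelian surface the effective divisors in a fixed algebraic equivalence class already move in a pencil); what saves you here is that $[\Theta]$ is a \emph{principal} polarization, so $h^0=1$ in every translate of the class and the effective divisor is unique, and moreover $\phi_\Theta$ is an isomorphism so every algebraically trivial twist is realized by a translation. The same principality is what you implicitly use in step (b): the two involutions $M\mapsto 2D_0-M$ and $M\mapsto K_C-M$ both preserving $W_{g-1}$ give that $W_{g-1}$ is invariant under translation by $K_C-2D_0$, and you need the (principality) fact that $\Theta$ has no nontrivial translation symmetry to conclude $2D_0\sim K_C$. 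Second, for the parity claim note that the paper's ``$\dim\mathfrak{L}(D)$ is even'' refers to $D_0$ itself; your observation that $\theta_0(\tau,-z)=\theta_0(\tau,z)$ forces $\mathrm{mult}_0(\Theta)$ to be even, combined with (c) at $p=0$, is the right and complete argument, and the detour through the Mumford--Atiyah invariant is unnecessary. With these caveats the outline is correct, and citing Poincar\'e's formula and Riemann--Kempf rather than reproving them is entirely appropriate.
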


\ni If such a theta characteristic $D_0$ is fixed, a quadratic form $q_0$ in $Q(J_C[2])$ is fixed as well; then, any theta characteristic on the curve is of the form $D_0+v$ with $v =[{}^{m'}_{m''}]$ and $m', m'' \in \mathds{Z}_2^{g}$ and the corresponding quadratic form is $q_0 + v$. A Riemann theta function $\theta_{[m]}$ with reduced characteristic $[m]=[{}^{m'}_{m''}]$ can be therefore regarded as a function $\theta[q]$ associated with the quadratic form $q= q_0 + [{}^{m'}_{m''}]$. The function $z \rightarrow \theta[q](\tau, z)$ is even (resp. odd) whenever $q$ is even (resp. odd), hence the theta constant $\theta[q]$ is non-trivial if and only if $q$ is even; furthermore, for any $q= q_0 + [{}^{m'}_{m''}]$ and for any $(k,h) \in \mathds{Z}_2^{g} \times \mathds{Z}_2^{g}$ the following transformation law holds (cf. \cite{FR}):
\begin{equation*}
\label{transformation}
\theta[q]\left (\tau, z+\frac12 h + \frac12 \tau \cdot k \right ) = {\bf e}\left ( - \frac 12 k \cdot (m'' + h) - k \cdot z -\frac 14 {}^t k \cdot \tau \cdot k   \right ) \theta[q + [{}^{k}_{h}]](\tau, z ) 
\end{equation*}
\ni Thanks to this formula the pull-back of the zero locus of any Riemann theta function $\theta[q]$ also defines a divisor $\Theta[q]$ on $J_C$ and $\text{mult}_0(\Theta[q+v])=\text{mult}_v(\Theta)$ for any $v \in J_C[2]$. Riemann's theorem thus implies that the effective theta divisors are those associated with odd quadratic forms. Therefore, for any odd quadratic form $q \in Q(J_C[2])_-$ the associated theta characteristic is of the type $D_q=P_1 + \cdots + P_{g-1}$  and is actually the divisor that is cut on the canonical curve by a hyperplane tangent at the image points of $P_1, \dots, P_{g-1}$ in $\mathds{P}^{g-1}$ under the canonical map; the direction of such a hyperplane in $\mathds{P}^{g-1}$ is then given by the gradient of the corresponding Riemann theta function valued at $z=0$:
\begin{equation*}
{\rm grad}^0_z\theta[q](\tau):= \left ( \frac {\partial \theta[q]}{\partial z_1}(\tau, 0), \dots ,  \frac {\partial \theta[q]}{\partial z_g}(\tau, 0) \right )
\end{equation*}
\ni which is non-trivial if and only if $q$ is odd. The Jacobian determinant of $g$ Riemann theta functions valued at $z=0$ will be henceforward denoted by:
\begin{equation}
\label{jacobians}
D[q_1, \dots, q_g] (\tau) := ({\rm grad}^0_z\theta [q_1] \wedge \dots \wedge {\rm grad}^0_z\theta [q_g]) (\tau)
\end{equation}

\ni The algebraic link between theta constants and Jacobian determinants is displayed by Igusa's conjectural formula (cf. \cite{Igusa}), which has been proved up to the case $g=5$ (cf. \cite{Frobenius} and \cite{Fay}). In the next section we shall resort to a coordinate-free version of the formula for ratios of determinants with explicit signs, which can be derived from the addition formula. 

\section{Bitangents of a plane quartic: Weber's formula}

For the rest of the paper we will be only concerned with the $g=3$ case.
The canonical model of a non singular curve $C$ of genus $3$ is a smooth plane quartic, whose 28 bitangents are in bijection with the $28$ odd quadratic forms on $J_C[2]$. By virtue of the geometrical link recalled in the previous section, there exist homogeneus coordinates $(Z_1: Z_2:Z_3)$  in $\mathds{P}^2$ such that the equations of the $28$ bitangents are:
\begin{equation} 
\label{bitgrad}
\sum_{i=1}^3 \frac{\partial \theta [q]}{\partial Z_i}(\tau,0) Z_i=0, \qquad  \forall q \in Q(J_C[2])_-
\end{equation}

\ni In this case, an Aronhold system is a collection of seven odd quadratic forms $q_1, \dots , q_7$ such that each sub-triple $\{q_i, q_j, q_k\} \subset \{q_1,  \dots , q_7\}$ is azygetic, which means $q_i + q_j + q_k$ is even; there exist exactly $288$ distinct Aronhold systems when $g=3$. Once an Aronhold system is fixed, the remaining $21$ odd quadratic forms can be simply described in terms of it as follows:
\begin{equation}
\label{21qforms}
q_{ij}:= q_S + q_i + q_j \quad \quad \forall i \neq j 
\end{equation}  where $q_S:= \sum_{i=1}^7q_i$ is an even quadratic form. The other $35$ even quadratic forms different form $q_S$ are easily seen to be described in terms of the Aronhold system as follows:
\begin{equation}
\label{evenforms}
q_{ijk}:= q_i + q_j + q_k \quad \quad \forall i, j , k \quad \text{distinct} 
\end{equation}

\ni An Aronhold system of bitangents for the plane quartic is then a collection of seven bitangents associated with an Aronhold system of quadratic forms; this geometrically translates into the condition that for any collection of three bitangents out of the seven, the six corresponding points of tangency on the quartic do not lie in the same conic. The datum of an Aronhold system is enough to recover an equation for the plane quartic along with equations for the remaining $21$ bitangents; this is basically done by means of the Steiner complexes of bitangents determined by the sub-collections of six bitangents in the Aronhold system. We will only recall here the main features of the method of reconstruction with a particular focus on the Riemann model of the curve (cf. \cite{Weber} for details and \cite{Dolgachev} for a modern exposition of the subject).\\

\ni The following statement holds:

\begin{prop}
\label{model}
Let $q$ be a non-null quadratic form on $J_C[2]$ and let $\{q_1, q'_1\}$, $\{q_2, q'_2\}$ and $\{q_3, q'_3\}$ be three pairs of odd quadratic forms on $J_C[2]$ such that $q_i+q'_i=q$ for any $i=1,2,3$. Then, for any two of these pairs there exists a conic that passes through the eight points of tangency; in particular, an equation for the quartic is given by:
\begin{equation}
\label{modelequation}
4 f_1\xi_1f_2\xi_2-(f_1 \xi_1 + f_2 \xi_2 + f_3 \xi_3 )^2=0
\end{equation}
or, in Weber's notation:
\begin{equation*}
\sqrt{f_1\xi_1} + \sqrt{f_2\xi_2}+\sqrt{f_3\xi_3}=0
\end{equation*}
where $\{f_i, \xi_i\}$ is a suitable pair of linear forms associated with the bitangents corresponding to the pair $\{q_i, q'_i\}$.
\end{prop}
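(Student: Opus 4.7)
\medskip

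\noindent\textbf{Proof proposal.} The plan is to translate the hypothesis into the language of contact divisors, identify a common theta characteristic attached to the three pairs, and then exploit a forced linear dependence in a two-dimensional space of sections to obtain the equation by squaring twice.

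For each $i$, let $D_i$, $D'_i$ denote the contact divisors on $C$ of the two bitangents associated to $q_i, q'_i$ (each being a sum of two points, since $g=3$). Under the bijection between $Q(J_C[2])$ and theta characteristics, the hypothesis $q_i + q'_i = q$ translates into $D'_i - D_i \sim q$ as a class in $J_C[2]$. Together with the defining relation $2D_i \sim K_C$, this gives $D_i + D'_i \sim L$ for every $i$, where $L := K_C \otimes q$ is a theta characteristic independent of $i$ and satisfying $L^{\otimes 2} \cong \mathcal{O}_C(2)$ via the canonical embedding $C \hookrightarrow \mathds{P}^2$. Riemann--Roch yields $h^0(C,L) = g-1 = 2$, since $h^0(K_C \otimes L^{-1}) = h^0(q^{-1}) = 0$.

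For the first claim, fix two pairs $i \neq j$: the effective divisor $D_i + D'_i + D_j + D'_j$ has class $2L = 2K_C$ (the $2q$ part being trivial), so it lies in the linear system $|\mathcal{O}_C(2)|$. The restriction map $H^0(\mathds{P}^2, \mathcal{O}_{\mathds{P}^2}(2)) \to H^0(C, \mathcal{O}_C(2))$ is injective because $C$ is irreducible of degree $4 > 2$, and both sides have dimension $6$ (the right-hand side by Riemann--Roch applied to $K_C^{\otimes 2}$); hence the map is an isomorphism and the eight tangency points are cut out by a unique conic.

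For the equation of the quartic, observe that $f_i \xi_i$, seen as a section of $\mathcal{O}_C(2) \cong L^{\otimes 2}$, has divisor $2(D_i + D'_i) = 2(s_i)$ for a section $s_i \in H^0(C,L)$ uniquely determined up to sign by $s_i^2 = f_i \xi_i \restriction{}{C}$. In the generic situation the divisors $D_i + D'_i$ are pairwise distinct, so the $s_i$ are pairwise non-proportional in the two-dimensional space $H^0(C,L)$; the forced linear relation among $s_1, s_2, s_3$ therefore has all coefficients non-zero, and after absorbing them into the normalizations of the $f_i, \xi_i$ and a suitable choice of sign one may write $s_1 + s_2 + s_3 = 0$ on $C$. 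Squaring gives $2 s_1 s_2 = f_3\xi_3 - f_1\xi_1 - f_2\xi_2$ on $C$, and squaring once more yields the identity $4 (f_1\xi_1)(f_2\xi_2) = (f_1\xi_1 + f_2\xi_2 - f_3\xi_3)^2$ as sections of $\mathcal{O}_C(4)$. Both sides being degree-$4$ polynomials in the ambient coordinates whose difference vanishes identically on the irreducible quartic $C$, the difference is a non-zero scalar multiple of any defining polynomial of $C$, which after rescaling gives (\ref{modelequation}) (up to the sign of $f_3\xi_3$, harmlessly absorbed by flipping the sign of one of the two linear forms).

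The main obstacle is the bookkeeping of signs and normalizations in the last step: one has to verify that the three $s_i$ are genuinely non-proportional (so that none of the coefficients in the linear relation vanishes) and that the resulting constant in front of $F$ is non-zero, and then consistently absorb the scalars into the defining equations of the six bitangents so as to recover the symmetric form displayed in (\ref{modelequation}).
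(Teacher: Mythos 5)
The paper states Proposition~\ref{model} without proof, deferring to Weber and to Dolgachev's book, so there is no internal argument to compare against; your proposal is the standard modern proof of this fact and is essentially correct. Three small points should be tightened. First, $L=K_C+q$ has degree $4$, so it is not a theta characteristic in the sense used in the paper (a square root of $K_C$); it is a square root of $\mathcal{O}_C(2)\cong K_C^{\otimes 2}$, which is all that your Riemann--Roch computation $h^0(L)=2$ actually uses (and note that the hypothesis that $q$ be non-null enters precisely in $h^0(-q)=0$). Second, the hedge about the ``generic situation'' is unnecessary: two distinct bitangents of a smooth quartic cannot share a contact point, since a common contact point $P$ would force both lines to coincide with the tangent line $T_PC$; hence the divisors $D_i+D'_i$ have pairwise disjoint supports, the sections $s_i$ are automatically pairwise non-proportional, and all three coefficients in the linear relation are non-zero for \emph{every} smooth quartic, not just a general one. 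Third, the non-vanishing of the constant in front of the defining polynomial $F$, which you flag as an obstacle but do not verify, is immediate: if $4f_1\xi_1f_2\xi_2-(f_1\xi_1+f_2\xi_2-f_3\xi_3)^2$ were the zero polynomial, then $f_1\xi_1f_2\xi_2$ would be the square of a quadratic form, which is impossible because $f_1,\xi_1,f_2,\xi_2$ are four pairwise non-proportional linear forms. With these repairs the argument is complete and matches the classical one the paper cites.
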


\ni As any subtriple $q_i, q_j, q_k$ of an Arnohold system is an azygetic triple, it can be completed to three pairs $\{q_i, q'_i\}$, $\{q_j, q'_j\}$ and $\{q_k, q'_k\}$ such as in the statement of Proposition \ref{model}. Thus, any three bitangents in an Aronhold system cannot intersect at a same point, because such a point would be a singular point of the curve by (\ref{modelequation}), while the curve is smooth; this proves the following:

\begin{coro}
Up to a projective transformation, an Aronhold system of bitangents for the quartic is given by the following equations in $\mathds{P}^2$:\\
\begin{equation}
\label{eqbit}
\begin{array}{lll}
\beta_1: \quad X_1=0  & & \beta_5 : \quad a_{11}X_1 + a_{12}X_2 + a_{13}X_3 = 0 \\
\beta_2: \quad X_2=0 & & \beta_6 : \quad a_{21}X_1 + a_{22}X_2 + a_{23}X_3 = 0 \\
\beta_3: \quad X_3=0 & & \beta_7 : \quad a_{31}X_1 + a_{32}X_2 + a_{33}X_3 = 0 \\
\beta_4: \quad X_1+X_2+X_3=0
\end{array}
\end{equation} for suitable $(a_{i1}: a_{i2}:a_{i3}) \in \mathds{P}^2$.
\end{coro}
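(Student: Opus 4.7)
The plan is to exploit the action of $PGL(3, \mathds{C})$ on $\mathds{P}^2$ together with the fact — established in the paragraph preceding the statement — that no three bitangents in an Aronhold system can share a common point. The normalization (\ref{eqbit}) is precisely what one obtains by using projective freedom to put the first four lines into the standard frame of $\mathds{P}^2$; the remaining three bitangents are then forced to be generic linear forms whose coefficients $(a_{i1}:a_{i2}:a_{i3})\in\mathds{P}^2$ are projective invariants of the configuration.

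First I would observe that the three lines $\beta_1,\beta_2,\beta_3$ are pairwise distinct (as they correspond to distinct odd quadratic forms, and hence to distinct bitangents) and, by the non-concurrency property, their pairwise intersections $P_1:=\beta_2\cap\beta_3$, $P_2:=\beta_1\cap\beta_3$, $P_3:=\beta_1\cap\beta_2$ form a non-degenerate triangle. Since $PGL(3,\mathds{C})$ acts transitively on ordered triples of non-collinear points, one can choose a projective transformation sending $P_1, P_2, P_3$ to the coordinate vertices $[1:0:0], [0:1:0], [0:0:1]$; in the new coordinates $\beta_i$ is forced to coincide with the coordinate line $X_i=0$ for $i=1,2,3$. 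The residual projective freedom is precisely the two-dimensional torus of diagonal rescalings $(X_1,X_2,X_3)\mapsto(\lambda_1X_1,\lambda_2X_2,\lambda_3X_3)$ taken modulo a global scalar.

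Next, writing $\beta_4$ in these coordinates as $b_1X_1+b_2X_2+b_3X_3=0$, I would argue that each triple $\{\beta_i,\beta_j,\beta_4\}$ with $i\neq j\in\{1,2,3\}$ sits inside the Aronhold system and is therefore non-concurrent, so $\beta_4$ avoids each of the vertices $P_1, P_2, P_3$; this translates exactly into the condition $b_1, b_2, b_3 \neq 0$. The residual diagonal rescaling can then be used to absorb the $b_i$'s, bringing $\beta_4$ to the canonical form $X_1+X_2+X_3=0$ and exhausting the projective freedom. The remaining bitangents $\beta_5,\beta_6,\beta_7$ must therefore admit equations of the shape displayed in (\ref{eqbit}), with the points $(a_{i1}:a_{i2}:a_{i3})\in\mathds{P}^2$ now uniquely determined. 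The only step requiring attention is the non-vanishing of $b_1, b_2, b_3$, but this follows immediately from a second application of the non-concurrency property, so no real obstacle arises beyond this observation.
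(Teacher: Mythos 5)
Your proposal is correct and follows the same route as the paper: the paper derives the corollary from the fact (established via Proposition \ref{model}) that no three bitangents of an Aronhold system are concurrent, leaving the projective normalization implicit, and you simply spell out that normalization (triangle of reference for $\beta_1,\beta_2,\beta_3$, residual diagonal torus absorbing the coefficients of $\beta_4$, non-vanishing of those coefficients again by non-concurrency). No gap.
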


\ni A proof of the following classical result will be omitted here, as it can be found in \cite{Weber}:
\begin{prop}[\bf{Riemann's model}]
\label{Rmodel}
Let $\beta_1, \dots , \beta_7$ an Aronhold system of bitangents for the curve as in (\ref{eqbit}) and $q_1, \dots , q_7$ the corresponding quadratic forms. The three pairs $\{q_1, q_{23}\}$, $\{q_2, q_{13}\}$ and $\{q_3, q_{12}\}$ (cf. (\ref{21qforms})) are such as in the statement of Proposition \ref{model}, and an equation for the curve is given by:

\begin{equation*}
4 X_1\xi_{23}X_2\xi_{13}=(X_1 \xi_{23} + X_2 \xi_{13} + X_3 \xi_{12} )^2
\end{equation*}
where $\xi_{ij}$ are linear forms associated with the bitangents corresponding to $q_{ij}$ and determined by the linear system:
\begin{equation}
\label{Riemannmodel}
\left \{
\begin{array}{l}
\xi_{23} + \xi_{13} + \xi_{12}+X_1 + X_2 + X_3 = 0;
\\
\frac{\xi_{23}}{a_{i1}} + \frac{\xi_{13}}{a_{i2}} + \frac{\xi_{12}}{a_{i3}}+k_i(a_{i1}X_1 +a_{i2} X_2 + a_{i3}X_3)= 0 \quad \quad i=1,2,3
\end{array}
\right.
\end{equation} with $k_1, k_2, k_3 \in \mathds{C}^*$ unique solution of the linear system:
\begin{equation}
\label{k}
\begin{pmatrix} \lambda_1 {a_{11}} & \lambda_2{a_{21}} & \lambda_3{a_{31}} \\ \lambda_1{a_{12}} & \lambda_2{a_{22}} & \lambda_3{a_{32}} \\ \lambda_1{a_{13}} & \lambda_2{a_{23}} & \lambda_3{a_{33}} \end{pmatrix} \begin{pmatrix} k_1 \\ k_2 \\ k_3 \end{pmatrix} = \begin{pmatrix} -1 \\ -1 \\ -1 \end{pmatrix} 
\end{equation}
\ni where $\lambda_1, \lambda_2, \lambda_3 \in \mathds{C}^*$ are such that:
\begin{equation*}
 {\begin{pmatrix} \frac{1}{a_{11}} & \frac{1}{a_{21}} & \frac{1}{a_{31}} \\ \frac{1}{a_{12}} & \frac{1}{a_{22}} & \frac{1}{a_{32}} \\ \frac{1}{a_{13}} & \frac{1}{a_{23}} & \frac{1}{a_{33}} \end{pmatrix}} \begin{pmatrix} \lambda_1 \\ \lambda_2 \\ \lambda_3 \end{pmatrix} = \begin{pmatrix} -1 \\ -1 \\ -1 \end{pmatrix}
\end{equation*}

\end{prop}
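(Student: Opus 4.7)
The plan is to combine Proposition \ref{model} with the bitangency conditions imposed by the four remaining bitangents of the Aronhold system. First I would verify that the three pairs $\{q_1,q_{23}\}$, $\{q_2,q_{13}\}$, $\{q_3,q_{12}\}$ satisfy the hypothesis of Proposition \ref{model}. By the defining formula (\ref{21qforms}), a direct computation gives
$$q_1 + q_{23} = q_2 + q_{13} = q_3 + q_{12} = q_S + q_1 + q_2 + q_3,$$
so all three pairs have a common sum $q$, which is non-null because $q_1,q_2,q_3$ is an azygetic subtriple of the Aronhold basis. Since $X_1,X_2,X_3$ are the linear forms associated with $\beta_1,\beta_2,\beta_3$, Proposition \ref{model} immediately produces the desired shape of the quartic equation
$$4X_1\xi_{23}X_2\xi_{13} = (X_1\xi_{23}+X_2\xi_{13}+X_3\xi_{12})^2$$
for some linear forms $\xi_{23},\xi_{13},\xi_{12}$ associated with the bitangents $\beta_{23},\beta_{13},\beta_{12}$. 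Each $\xi_{ij}$ is only determined up to a scalar, which is precisely the ambiguity to be fixed by (\ref{Riemannmodel}).

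Next I would pin down these scalars by imposing that $\beta_4,\beta_5,\beta_6,\beta_7$ are tangent to the quartic. The key observation is that the above equation can be written as a product of two conics,
$$(X_1\xi_{23}+X_2\xi_{13}+X_3\xi_{12})^2-4X_1\xi_{23}X_2\xi_{13} = \bigl((\sqrt{X_1\xi_{23}}-\sqrt{X_2\xi_{13}})^2+X_3\xi_{12}\bigr)\bigl((\sqrt{X_1\xi_{23}}+\sqrt{X_2\xi_{13}})^2+X_3\xi_{12}\bigr),$$
so a line $\ell=0$ is bitangent exactly when, restricted to $\ell$, each of the three forms $X_i\xi_{jk}$ becomes the square of a linear form in two variables and these three square roots add to zero (Weber's $\sqrt{A}+\sqrt{B}+\sqrt{C}=0$ on the bitangent). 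Applying this to $\beta_4:X_1+X_2+X_3=0$ yields, after a normalization of the scalars hidden in $\xi_{23},\xi_{13},\xi_{12}$, the identity of linear forms
$$\xi_{23}+\xi_{13}+\xi_{12}+X_1+X_2+X_3=0,$$
which is the first line of (\ref{Riemannmodel}). Applying the same bitangency principle to $\beta_{4+i}:a_{i1}X_1+a_{i2}X_2+a_{i3}X_3=0$ for $i=1,2,3$ (dividing through by $a_{i1},a_{i2},a_{i3}$ to convert the tangent condition into one of the same type) produces the three relations
$$\tfrac{\xi_{23}}{a_{i1}}+\tfrac{\xi_{13}}{a_{i2}}+\tfrac{\xi_{12}}{a_{i3}}+k_i(a_{i1}X_1+a_{i2}X_2+a_{i3}X_3)=0,$$
with proportionality constants $k_i$ left undetermined at this stage.

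Finally I would show that the scalars $k_i$ are forced by (\ref{k}). The system (\ref{Riemannmodel}) is four vector equations in the nine coefficients of $\xi_{23},\xi_{13},\xi_{12}$, hence overdetermined; compatibility imposes three scalar relations on $(k_1,k_2,k_3)$. To identify them cleanly, I would use the dual basis of $(X_1,X_2,X_3)$: the weights $\lambda_i$ in the statement are nothing but the coefficients that express $\beta_4$ as a linear combination of $\beta_5,\beta_6,\beta_7$, which is exactly the content of the auxiliary system for $(\lambda_1,\lambda_2,\lambda_3)$. Rewriting the $i=1,2,3$ equations in this dual basis, summing them against the weights $\lambda_i$, and comparing with the normalization already fixed by the $\beta_4$ relation produces precisely the matrix equation (\ref{k}), with the right-hand side $(-1,-1,-1)$ inherited from the first equation of (\ref{Riemannmodel}).

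The main obstacle will be the last step: organizing the overdetermined linear system in a way that makes the duality between the $a_{ij}$ and their inverses transparent, so that the clean form of (\ref{k}) and its auxiliary system drops out instead of a messy $3\times 3$ determinantal computation. This is essentially the bookkeeping that Weber carries out; once the bitangency of each $\beta_{4+i}$ is translated into the second line of (\ref{Riemannmodel}), identifying the compatibility locus with (\ref{k}) is a purely linear-algebraic verification that should pose no conceptual difficulty.
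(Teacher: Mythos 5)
The paper offers no proof of this proposition to compare against --- it is explicitly omitted and deferred to \cite{Weber} --- so your argument has to stand on its own. The first half does: the computation $q_1+q_{23}=q_2+q_{13}=q_3+q_{12}=q_4+q_5+q_6+q_7$ is correct, this common sum is non-zero because the Aronhold system is a basis of $V\dot\cup Q(V)$ (linear independence, not the azygetic property, is what you actually need here), and Proposition \ref{model} then delivers the shape of the equation with the $\xi_{ij}$ determined only up to scalars.

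The second half, which is where the real content of the proposition lies, has a genuine gap: the bitangency criterion you rely on is false. If on a line $\ell$ one has $X_1\xi_{23}|_\ell=\alpha^2$, $X_2\xi_{13}|_\ell=\beta^2$, $X_3\xi_{12}|_\ell=\gamma^2$ with $\alpha+\beta+\gamma=0$, then the restriction of $(A+B+C)^2-4AB$ to $\ell$ equals $4(\alpha^2+\beta^2)(\alpha+\beta)^2$, which is the square of a binary quadratic only in degenerate cases; your factorization into the two ``conics'' $(\sqrt A\mp\sqrt B)^2+C$ is not a polynomial factorization and does not rescue this. In fact $\beta_4,\dots,\beta_7$ do not belong to the Steiner complex $\{q_1,q_{23}\},\{q_2,q_{13}\},\{q_3,q_{12}\}$ at all --- adding $q_4+q_5+q_6+q_7$ to $q_{4+i}$ produces an \emph{even} form --- so no tangency test phrased purely in terms of this single representation of the quartic will produce the desired relations. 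Moreover, even with a correct tangency condition you would only constrain the restrictions $\xi_{ij}|_{\beta_{4+i}}$, i.e.\ obtain relations modulo the linear form cutting out $\beta_{4+i}$, whereas (\ref{Riemannmodel}) asserts identities of linear forms on all of $\mathds{P}^2$; that promotion is never justified, and the passage to (\ref{k}) and the auxiliary $\lambda$-system is asserted rather than derived. The classical route (Weber's, reproduced in Dolgachev) is different: one applies Proposition \ref{model} to \emph{several} azygetic triples of the Aronhold set --- for instance $\{q_1,q_2,q_4\}$ yields the pairs $\{q_1,q_{24}\},\{q_2,q_{14}\},\{q_4,q_{12}\}$, so $\xi_{12}$ reappears paired with $X_1+X_2+X_3$ --- and the equations (\ref{Riemannmodel}) arise from the compatibility of these different representations of the same quartic, not from testing tangency of the lines $\beta_{4+i}$ against one fixed representation.
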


\ni Note that the unicity of such a construction is a consequence of the results proved by Caporaso and Sernesi (cf. \cite{CapSer}) and by Lehavi (cf. \cite{Lehavi}).\\

\ni As the curve $C$ is fixed, for the sake of simplicity we shall omit the symbol of the variable $\tau$ in the expressions of theta functions and theta constants throughout the rest of this section. Furthermore, by a slight abuse of notation we shall denote by $(q)=(q',q'')$ the non-reduced characteristic that corresponds to the coordinates of the quadratic form $q$ with respect to a fixed quadratic form $q_0$ and by $(\sum_i q_i)$ the non-reduced characteristic $\sum_i (q_i)$. To prove Weber's formula we need the following Proposition first. 

\begin{prop}
\label{jacobi}
Let $\{q_1, q_2, q_3, q_4\}$ any azygetic $4$-tuple of odd quadratic forms, and let $\{q_5, q_6, q_7\}$ one of the two distinct triples which complete the $4$-tuple to an Aronhold system $\{q_1, q_2, q_3, q_4, q_5, q_6, q_7\}$. Then:
\begin{equation*}
\frac{D[q_4, q_2, q_3]}{D[q_1, q_2, q_3]}= - {\bf e}((q_5+q_6+q_7)' \cdot (q_1+q_4)'') \frac{\theta(q_5+q_6+q_1)\theta(q_5+q_7+q_1)\theta(q_6+q_7+q_1)}{\theta(q_5+q_6+q_4)\theta(q_5+q_7+q_4)\theta(q_6+q_7+q_4)} 
\end{equation*}
\ni where $D[q_i,q_j, q_k]$ are the Jacobian determinants of the corresponding Riemann theta functions with reduced characteristics valued at $z=0$, as in (\ref{jacobians}). 
\end{prop}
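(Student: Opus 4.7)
The plan is to derive the identity from the addition formula (\ref{addition}), applied twice in parallel with specific specializations adapted to the Aronhold system, and then forming a quotient. The overall picture is that each of $D[q_1, q_2, q_3]$ and $D[q_4, q_2, q_3]$ can be written (up to a common factor) as a product of three even theta constants associated with the sub-sums of $\{q_5, q_6, q_7\}$ with $q_1$ or $q_4$ respectively, so that the ratio in the statement falls out.

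Since (\ref{addition}) has only two $v$-dependent factors per summand on the right-hand side, a direct triple derivative in $v$ cannot recover the $3 \times 3$ Jacobian determinant $D[q_i, q_j, q_k]$ needed when $g = 3$. To extract it, I would multiply both sides by an auxiliary odd theta function $\theta[r](v)$ (with $r$ chosen from the Aronhold system), obtaining three $v$-dependent factors per summand. Differentiating three times in $v$ and evaluating at $u = v = 0$ produces, in each surviving term of the right-hand side, an expression proportional to a Jacobian determinant; a parity argument on the index $a \in \mathds{Z}^{2g}/2\mathds{Z}^{2g}$ kills every summand in which any of the $n_i + a$ attached to a $z$-derivative is even, collapsing the sum to a controlled set. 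Choosing the outer characteristics $m_1, \dots, m_4$ and the auxiliary $r$ so that the surviving determinant is $D[q_1, q_2, q_3]$, the accompanying product of theta constants should specialize to precisely $\theta(q_a + q_b + q_1)$ for $\{a, b\} \subset \{5, 6, 7\}$.

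Running the same argument with the role of $q_1$ played by $q_4$ yields a parallel identity for $D[q_4, q_2, q_3]$ with the same auxiliary choices in every other slot. Taking the quotient cancels the auxiliary $\theta[r]$-derivative together with the outer factors $\theta[m_3](0)\theta[m_4](0)$ and anything depending only on $q_2, q_3, q_5, q_6, q_7$, leaving on the left the ratio $D[q_4, q_2, q_3]/D[q_1, q_2, q_3]$ and on the right the ratio of the six theta constants in the statement.

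The main technical obstacle is the careful bookkeeping of signs. The prefactor $-{\bf e}((q_5 + q_6 + q_7)' \cdot (q_1 + q_4)'')$ accumulates from three distinct sources: the exponentials ${\bf e}(m_1' \cdot a'')$ that weight the inner sum in (\ref{addition}), evaluated at the surviving index; the reduction formula (\ref{reduction}), used repeatedly to identify non-reduced characteristic sums such as $(q_5 + q_6 + q_1)$ with their reductions modulo $2$ whenever a theta constant is renormalized; and the parity asymmetry introduced when the two specializations are compared through the swap $q_1 \leftrightarrow q_4$. Carefully combining these contributions, one expects the phases to recombine to the claimed exponential together with the global sign $-1$.
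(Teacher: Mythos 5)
There is a genuine gap, and it sits at the heart of your plan: the step ``differentiating three times in $v$ and evaluating at $u=v=0$ produces, in each surviving term, an expression proportional to a Jacobian determinant'' is false. If $f_1,f_2,f_3$ are odd functions of $v$, then all their even-order derivatives vanish at the origin, so by Leibniz
\begin{equation*}
\partial_{j_1}\partial_{j_2}\partial_{j_3}\bigl(f_1f_2f_3\bigr)\big|_{v=0}=\sum_{\sigma\in S_3}\partial_{j_{\sigma(1)}}f_1(0)\,\partial_{j_{\sigma(2)}}f_2(0)\,\partial_{j_{\sigma(3)}}f_3(0),
\end{equation*}
which is the \emph{symmetrized} product of the three gradients (a permanent), not the alternating one. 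Since the third-derivative tensor is automatically symmetric in $(j_1,j_2,j_3)$, no choice of indices or linear combination of these quantities can reproduce the antisymmetric combination $D[q_1,q_2,q_3]$ of (\ref{jacobians}). This obstruction is precisely what makes the full Jacobi derivative formula (Igusa's conjecture, proved only for $g\leq5$ by Frobenius and Fay, as recalled in the paper) a hard theorem; your plan in effect tries to prove that stronger product formula for each determinant separately and then divide, and the mechanism you propose for doing so does not work. The auxiliary factor $\theta[r](v)$ also disturbs the left-hand side of (\ref{addition}), which you would then have to control separately.

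The statement only concerns a \emph{ratio} of Jacobian determinants, and the intended argument is much softer: no triple differentiation is needed. Specializing (\ref{addition}) at $u=0$ with $n_1=(q_5+q_6)$, $n_2=(q_5+q_7)$, $n_3=(q_6+q_7)$, $n_4=0$ forces $m_3=(q_6)$ and $m_4=-(q_7)$ to be odd, so the left-hand side vanishes identically; after the parity analysis on the index $a$ (which you correctly anticipate in spirit), the $64$-term sum collapses to a four-term identity
\begin{equation*}
\sum_{k=1}^{4}\chi(q_k)\,\theta(q_5+q_6+q_k)\,\theta(q_5+q_7+q_k)\,\theta(q_6+q_7+q_k)(z)\,\theta[q_k](z)=0 ,
\end{equation*}
valid for all $z$. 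Each $z$-dependent factor is a product of one even and one odd function, so a \emph{single} differentiation at $z=0$ yields the linear dependence $\sum_{k=1}^{4}c_k\,{\rm grad}^0_z\theta[q_k]=0$ with $c_k=\chi(q_k)\,\theta(q_5+q_6+q_k)\theta(q_5+q_7+q_k)\theta(q_6+q_7+q_k)$. Wedging this relation with ${\rm grad}^0_z\theta[q_2]\wedge{\rm grad}^0_z\theta[q_3]$ kills the $k=2,3$ terms and gives $c_1D[q_1,q_2,q_3]=-c_4D[q_4,q_2,q_3]$, which is exactly the claimed identity once $\chi(q_1)/\chi(q_4)={\bf e}((q_5+q_6+q_7)'\cdot(q_1+q_4)'')$ is inserted. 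In short: the determinants are extracted by linear algebra from a first-order relation among four gradients, not by higher differentiation of the addition formula.
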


\begin{proof}
If we set $u=0$ in the formula (\ref{addition}) and choose $n_1=(q_5+q_6)$, $n_2=(q_5+q_7)$, $n_3=(q_6+q_7)$ and $n_4=0$, we get for any $z \in \mathds{C}^g$:
\begin{equation*}
0= \sum_{q \in  Q(J_C[2])}\chi(q)\theta(q_5+q_6+q)\theta(q_5+q_7+q)\theta(q_6+q_7+q)( z)\theta(q)(z)
\end{equation*} where $\chi(q):={\bf e}((q_5+q_6+q_7)' \cdot q'')$. The right side of the identity is the sum of two terms $S_-$ and $S_+$, obtained by letting $q$ run respectively over $Q(J_C[2])_-$ and over $Q(J_C[2])_+$. Thanks to the labelling  introduced in (\ref{21qforms}) for the elements of $Q(J_C[2])_-$ one easily derives $S_-=S_-^{(4)}+S_-^{(6)}$, where:
\begin{equation*}
S_-^{(4)}=\sum_{i=1}^4\chi(q_i) \, \theta(q_5+q_6+q_i)\theta(q_5+q_7+q_i)\theta(q_6+q_7+q_i)( z)\theta[q_i](z)
\end{equation*}
\begin{equation*}
S_-^{(6)}= \sum_{j,k \in \{1,2,3,4\} \, \text{s.t.} \, j < k }\chi(q_{jk}) \theta(q_5+q_6+q_{jk})\theta(q_5+q_7+q_{jk})\theta(q_6+q_7+q_{jk})( z)\theta[q_{jk}](z) \\
\end{equation*}
As for $S_+$, the labelling introduced in (\ref{evenforms}) for the elements of $Q(J_C[2])_+$ shows that $S_+=S_+^{(4)}+S_+^{(6)}$ where $S_+^{(4)}$ is the term given by summing on the four quadratic forms $q_{i67}$ with $i \in \{1,2,3,4\}$, while $S_+^{(6)}$ is the term given by summing on the six quadratic forms $q_{5jk}$ with $j,k \in \{1,2,3,4\}$ and $j <k$. A straightforward computation with the reduction formula shows that $S_+^{(6)}$ and $S_-^{(6)}$ cancel out, whereas:
\begin{equation*}
S_+^{(4)}=\sum_{i=1}^4{\bf e}(a(q_6)+a(q_7)) \, \chi(q_i) \, \theta(q_5+q_6+q_i)\theta(q_5+q_7+q_i)\theta(q_6+q_7+q_i)( z)\theta[q_i](z) = S_-^{(4)}
\end{equation*}
\ni Therefore, one finally obtains the identity:
\begin{equation*}
\sum_{k=1}^{4} \chi(q_k) \theta(q_5+q_6+q_k) \theta(q_5+q_7+q_k)\theta(q_6+q_7+q_k)(z) \theta[q_k](z)=0 \quad \quad \forall z \in \mathds{C}^g
\end{equation*}
By taking the derivative with respect to each $z_j$ for $j=1,2,3$ and evaluating the resulting expression at $z=0$ one obtains the following identities:
\begin{equation*}
\sum_{k=1}^{4} \chi(q_k) \theta(q_5+q_6+q_k) \theta(q_5+q_7+q_k)\theta(q_6+q_7+q_k) \restriction{\frac{\partial\theta[q_k]}{\partial z_j}}{\, z=0}=0 \quad \quad j=1,2,3
\end{equation*} from which the statement clearly follows.
\end{proof}

\ni We can now state the main theorem of this note:

\begin{teo}[\bf{Weber's formula}]
Let $\tau \in \mathfrak{S}_3$ the period matrix of a smooth plane quartic $C$. If $q_1, \cdots q_7$ is an Aronhold system of quadratic forms on the $2$-torsion points of the Jacobian variety $\mathds{C}^3/(\mathds{Z}^3 + \tau\mathds{Z}^3)$, then for the coefficients in (\ref{eqbit}) one has:
\begin{equation*}
a_{ij} = \eta_i \, {\bf e}(q'_j \cdot (q_4+q_{4+i})'') \frac{\theta(q_4+q_r+q_j)\theta(q_4+q_s+q_j)}{\theta(q_{4+i}+q_r+q_j)\theta(q_{4+i}+q_s+q_j)} \quad \quad i,j=1,2,3
\end{equation*}
where $r$ and $s$ are such that $\{4+i, r, s\} = \{5, 6, 7 \}$ and $\eta_i$ is a non-zero scalar factor that only depends on the index $i$, which is due to the fact that the equations for $\beta_5$, $\beta_6$ and $\beta_7$  in (\ref{eqbit}) are defined up to a scalar.
\end{teo}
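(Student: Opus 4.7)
The strategy is to pull the projective change of coordinates between the canonical $Z$-coordinates of (\ref{bitgrad}) and the normalised $X$-coordinates of (\ref{eqbit}) back to the gradients of the theta functions, and then to unfold the resulting Jacobian ratios through two applications of Proposition \ref{jacobi}. Writing $L_q(Z):=\sum_{i=1}^{3}\frac{\partial\theta[q]}{\partial Z_i}(\tau,0)\,Z_i$, each bitangent $\beta_k$ is cut out in the $Z$-coordinates by $L_{q_k}(Z)=0$; since $\beta_j$ is also cut out by $X_j=0$ for $j=1,2,3$ and by $X_1+X_2+X_3=0$ for $j=4$, there exist nonzero scalars $\mu_1,\ldots,\mu_7$ with $X_j=\mu_jL_{q_j}$, $\sum_{j=1}^{3}\mu_jL_{q_j}=\mu_4L_{q_4}$ and $\sum_{j=1}^{3}a_{ij}\mu_jL_{q_j}=\mu_{4+i}L_{q_{4+i}}$. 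Expressing the last two relations in the basis $(L_{q_1},L_{q_2},L_{q_3})$ and applying Cramer's rule yields
\[
a_{ij}\;=\;\frac{\mu_{4+i}}{\mu_4}\cdot\frac{D_j^{\,4+i}(\tau)}{D_j^{\,4}(\tau)},
\]
where $D_j^{\,k}(\tau)$ denotes the Jacobian determinant (\ref{jacobians}) obtained by replacing $q_j$ with $q_k$ in $(q_1,q_2,q_3)$; the scalar $\mu_{4+i}/\mu_4$ does not depend on $j$ and is destined for $\eta_i$.

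Both quadruples $\{q_1,q_2,q_3,q_4\}$ and $\{q_1,q_2,q_3,q_{4+i}\}$ are azygetic as sub-collections of the Aronhold system, and are completed to it by $\{q_5,q_6,q_7\}$ and by $\{q_4,q_r,q_s\}$ respectively. Applying Proposition \ref{jacobi} to each (with a harmless transposition of slots, which multiplies the Jacobian determinant by $-1$ but whose sign cancels in the final ratio) and dividing, the common factor $D[q_1,q_2,q_3](\tau)$ disappears. The set equality $\{q_5,q_6,q_7\}=\{q_{4+i},q_r,q_s\}$ then allows one to match three of the six theta factors; commutativity of the non-reduced characteristic sums cancels $\theta(q_4+q_r+q_{4+i})$ against $\theta(q_{4+i}+q_r+q_4)$ (and similarly for $s$), and the factor $\theta(q_r+q_s+q_j)$ cancels outright, leaving
\[
\frac{D_j^{\,4+i}(\tau)}{D_j^{\,4}(\tau)}\;=\;\mathbf{e}(\Delta_{i,j})\cdot\frac{\theta(q_4+q_r+q_j)\,\theta(q_4+q_s+q_j)}{\theta(q_{4+i}+q_r+q_j)\,\theta(q_{4+i}+q_s+q_j)}\cdot\frac{\theta(q_r+q_s+q_4)}{\theta(q_r+q_s+q_{4+i})},
\]
where $\Delta_{i,j}=(q_4+q_r+q_s)'\cdot(q_j+q_{4+i})''-(q_{4+i}+q_r+q_s)'\cdot(q_j+q_4)''$ and the last theta quotient depends only on $i$ and goes into $\eta_i$.

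It remains to rewrite $\mathbf{e}(\Delta_{i,j})$ as $\mathbf{e}(q_j'\cdot(q_4+q_{4+i})'')$ times an $i$-only phase, which is the crux of the argument. A direct expansion modulo $2$ gives $\Delta_{i,j}\equiv(q_4+q_{4+i})'\cdot q_j''+C_i\pmod 2$ for some $C_i$ depending only on $i$, and the discrepancy $(q_4+q_{4+i})'\cdot q_j''-q_j'\cdot(q_4+q_{4+i})''$ equals the symplectic pairing $\omega(q_4+q_{4+i},q_j)$. The main obstacle is showing that this residual pairing is independent of $j\in\{1,2,3\}$, and this is precisely where the azygetic structure of every sub-triple $\{q_4,q_{4+i},q_j\}$ of the Aronhold system enters. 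Indeed, expanding $a(q_4+q_{4+i}+q_j)$ by bilinearity yields $a(q_4+q_{4+i}+q_j)\equiv a(q_4)+a(q_{4+i})+a(q_j)+\omega(q_4,q_{4+i})+\omega(q_4,q_j)+\omega(q_{4+i},q_j)\pmod 2$, and imposing the azygety condition $a(q_4)+a(q_{4+i})+a(q_j)+a(q_4+q_{4+i}+q_j)\equiv 1\pmod 2$ gives
\[
\omega(q_4,q_j)+\omega(q_{4+i},q_j)\;\equiv\;1+\omega(q_4,q_{4+i})\pmod 2,
\]
which is manifestly independent of $j$. Absorbing the $i$-only scalars $\mu_{4+i}/\mu_4$, the leftover theta quotient and this $i$-only phase into the constant $\eta_i$ then produces Weber's formula.
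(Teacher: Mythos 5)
Your proof is correct, and its first half --- expanding $L_{q_4}$ and $L_{q_{4+i}}$ in the basis $(L_{q_1},L_{q_2},L_{q_3})$ and applying Cramer's rule to get $a_{ij}=(\mu_{4+i}/\mu_4)\,D_j^{\,4+i}/D_j^{\,4}$ --- is exactly the paper's derivation of (\ref{Aronholddet}). The second half genuinely diverges. The paper does not stay inside the single Aronhold system: it repeats the Cramer argument on the Riemann model (\ref{Riemannmodel}), obtaining expressions for $1/a_{ij}$ through the second system $\{q_{23},q_{13},q_{12},q_4,\dots,q_7\}$, multiplies the two to compute the square $\mu_i^2$ of the unknown scalar explicitly in terms of theta constants and the constant $k_i$ of (\ref{k}), and only then applies Proposition \ref{jacobi} once, directly to the azygetic quadruple obtained by swapping $q_4$ and $q_{4+i}$ (which, incidentally, produces the phase already in the orientation $\mathbf{e}\bigl((q_j+q_r+q_s)'\cdot(q_4+q_{4+i})''\bigr)$, so no transposition of the pairing is needed); the leftover ratio $\theta(q_r+q_s+q_4)/\theta(q_r+q_s+q_{4+i})$ is then cancelled by the theta factor hidden in $\mu_i$. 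You instead apply Proposition \ref{jacobi} twice to the quadruples $\{q_1,q_2,q_3,q_4\}$ and $\{q_1,q_2,q_3,q_{4+i}\}$, divide, and absorb the undetermined scalar, the surviving theta ratio and the residual phase into $\eta_i$; your azygety computation showing $\omega(q_4+q_{4+i},q_j)\equiv 1+\omega(q_4,q_{4+i})$ independently of $j$ is correct and is precisely what is needed to flip $(q_4+q_{4+i})'\cdot q_j''$ into $q_j'\cdot(q_4+q_{4+i})''$. Since the theorem only asserts that $\eta_i$ is a nonzero scalar depending on $i$ alone, your shorter route fully proves it and avoids the Riemann model altogether; what it gives up is the explicit determination of $\eta_i$ as a square root of $1/k_i$ times a fixed power of $\mathbf{i}$, which is the extra information the paper extracts and then exploits in Corollary \ref{choicek1} to tie the normalization of the representatives $(a_{i1}:a_{i2}:a_{i3})$ to the choice $k_1=k_2=k_3=1$.
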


\begin{note}
\label{reducedformula}
\ni The reduction formula (\ref{reduction}) can be used in Weber's formula to express the coefficients of the bitangents in terms of reduced characteristics. In this case one has: 
\begin{equation*}
a_{ij} =  \rho_{ij} \cdot \, \eta_i \, {\bf e}(q'_j \cdot (q_4+q_{4+i})'')\frac{\theta[q_4+q_r+q_j]\theta[q_4+q_s+q_j]}{\theta[q_{4+i}+q_r+q_j]\theta[q_{4+i}+q_s+q_j]} \quad \quad i,j=1,2,3
\end{equation*} where, for any $i$ and $j$, $\rho_{ij}$ is the product of the reduction signs (cf. (\ref{reduction})) of the four theta constants appearing in the expression of $a_{ij}$.
\end{note} 

\begin{proof}[Proof of Weber's formula]
Let $f_i=f_i(X_1, X_2, X_3)$ be linear forms associated with the bitangent $\beta_i$ for any $i=1, \dots 7$; the equations (\ref{eqbit}) yield the following linear system for the $f_i$:
\begin{equation}
\label{equationsf}
\left \{
\begin{array}{l}
f_4 = f_1 + f_2 + f_3;
\\
f_5= a_{11}f_1 + a_{12}f_2 + a_{13}f_3
\\
f_6= a_{21}f_1 + a_{22}f_2 + a_{23}f_3
\\
f_7=a_{31}f_1 + a_{32}f_2 + a_{33}f_3 

\end{array}
\right.
\end{equation}
By (\ref{bitgrad}), there also exists a projective transformation $\varphi: \mathds{P}^2 \mapsto \mathds{P}^2$ such that:
\begin{equation*} 
f_i (X_1, X_2, X_3) = h_i \sum_{j=1}^3 \restriction{\frac{\partial \theta [q_i]}{\partial Z_j}}{\, z=0} \, \varphi_j(X_1, X_2, X_3) \qquad  \forall i = 1, \dots , 7
\end{equation*} with suitable coefficients $h_i \in \mathds{C}^*$. Thus, each equation in (\ref{equationsf}) yelds linear systems in the variables $h_i$ and $a_{ij}$:
\begin{equation}
\label{s4}
h_4 \restriction{\frac{\partial \theta [q_4]}{\partial Z_j}}{\, z=0} = \sum_{i=1}^3h_i \restriction{\frac{\partial \theta [q_i]}{\partial Z_j}}{\, z=0} \quad \quad j = 1,2,3
\end{equation}
\begin{equation}
\label{s567}
h_{4+i} \restriction{\frac{\partial \theta [q_{4+i}]}{\partial Z_j}}{\, z=0} = \sum_{l=1}^3a_{il}h_l \restriction{\frac{\partial \theta [q_l]}{\partial Z_j}}{\, z=0} \quad \quad j = 1,2,3 \quad i=1, 2, 3
\end{equation}
From (\ref{s4}) one has:
\begin{equation*}
h_1=\frac{D[q_4, q_2, q_3]}{D[q_1, q_2, q_3]}h_4; \quad h_2=\frac{D[q_1, q_4, q_3]}{D[q_1, q_2, q_3]}h_4; \quad h_3=\frac{D[q_1, q_2, q_4]}{D[q_1, q_2, q_3]}h_4;
\end{equation*}
\ni By replacing these solutions into (\ref{s567}), one gets the coefficients for $\beta_{4+i}$ for $i=1,2,3$:
\begin{equation}
\label{Aronholddet}
a_{i1}=\mu_i\frac{D[q_{4+i}, q_2, q_3]}{D[q_4, q_2, q_3]}; \, \quad \, a_{i2}=\mu_i \frac{D[q_1, q_{4+i}, q_3]}{D[q_1, q_4, q_3]}; \, \quad \,  a_{i3}=\mu_i\frac{D[q_1, q_2, q_{4+i}]}{D[q_1, q_2, q_4]};
\end{equation} where $\mu_i := h_{4+i}/h_4 \in \mathds{C}^*$. Therefore, the bitangents $\beta_{4+i}$ are uniquely determined as points in $\mathds{P}^2$ by duality. By repeating the same procedure as before with equations (\ref{Riemannmodel}), one obtains for suitable coefficients $h_{23}, h_{13}, h_{12} \in \mathds{C}^*$:
\begin{equation*}
h_4 \restriction{\frac{\partial \theta [q_4]}{\partial Z_j}}{\, z=0} = h_{23} \restriction{\frac{\partial \theta [q_{23}]}{\partial Z_j}}{\, z=0} + h_{13} \restriction{\frac{\partial \theta [q_{13}]}{\partial Z_j}}{(\, z=0} + h_{12} \restriction{\frac{\partial \theta [q_{12}]}{\partial Z_j}}{(\, z=0}
\end{equation*}
\begin{equation*}
k_i h_{4+i} \restriction{\frac{\partial \theta [q_{4+i}]}{\partial Z_j}}{\, z=0} = \frac{h_{23}}{a_{i1}} \restriction{\frac{\partial \theta [q_{23}]}{\partial Z_j}}{\, z=0} + \frac{h_{13}}{a_{i2}} \restriction{\frac{\partial \theta [q_{13}]}{\partial Z_j}}{(\, z=0} + \frac{h_{12}}{a_{i3}} \restriction{\frac{\partial \theta [q_{12}]}{\partial Z_j}}{\, z=0}
\end{equation*} with $i, j = 1,2,3$. By solving these linear systems, one has likewise:
\begin{equation*}
\frac{1}{a_{i1}}=k_i \mu_i\frac{D[q_{4+i}, q_{13}, q_{12}]}{D[q_4, q_{13}, q_{12}]}; \, \quad \,\frac{1}{a_{i2}}=k_i\mu_i\frac{D[q_{23}, q_{4+i}, q_{12}]}{D[q_{23}, q_4, q_{12}]}; \, \quad \, \frac{1}{a_{i3}}=k_i\mu_i \frac{D[q_{23}, q_{13}, q_{4+i}]}{D[q_{23}, q_{13}, q_4]};
\end{equation*} 
\ni Thus, by applying Proposition \ref{jacobi} to the azygetic $4$-tuples of the two Aronhold systems:
\begin{equation*}
\{q_1, q_2, q_3, q_4, q_5, q_6, q_7 \} \quad \quad \{q_{23}, q_{13}, q_{12}, q_4, q_5, q_6, q_7 \}
\end{equation*}
one gets an explicit expression for the square power of the constant factor in terms of theta constants:
\begin{equation*}
\begin{split}
\mu^2_i=&\frac{1}{k_i}\frac{D[q_4, q_2, q_3]D[q_4, q_{13}, q_{12}]}{D[q_{4+i}, q_2, q_3]D[q_{4+i}, q_{13}, q_{12}]}=\\
& =\frac{1}{k_i}{\bf e}(( q_4+q_{4+i})' \cdot (q_4+q_5+q_6+q_7)'')\frac{\theta^2(q_{4+i}+q_r+q_s)}{\theta^2(q_4+q_r+q_s)} \quad i=1,2,3\\
\end{split}
\end{equation*}
where $r$ and $s$ are such that $\{4+i, r, s\} = \{5,6,7\}$. Therefore, by replacing this expression into (\ref{Aronholddet}) one finally has:
\begin{equation*}
a_{ij} = -\epsilon_i e^{\frac{\pi}{2}{\bf i} ( q_4+q_{4+i})' \cdot (q_4+q_5+q_6+q_7)''} {\bf e}( (q_j+q_r+q_s)' \cdot (q_4+q_{4+i})'') \frac{\theta(q_4+q_r+q_j)\theta(q_4+q_s+q_j)}{\theta(q_{4+i}+q_r+q_j)\theta(q_{4+i}+q_s+q_j)}
\end{equation*}
where $\epsilon_i$ is a fixed root of $1/k_i$ for any $i=1,2,3$, and $-{\bf e}((q_r+q_s)' \cdot (q_4+q_{4+i})'')$ is a sign that can be absorbed into the definition of the root, as it only depends on the index $i$. This proves the statement.
\end{proof}

\ni The following corollary follows as a straightforward consequence:
\begin{coro}
\label{choicek1}
By setting in Weber's formula:
\begin{equation*}
\eta_i:=\epsilon_i e^{\frac{\pi}{2}{\bf i}( q_4+q_{4+i})' \cdot (q_4+q_5+q_6+q_7)''} \quad \quad \quad i=1,2,3
\end{equation*}
where $\epsilon_i$ is a chosen sign that only depends on $i$, the corresponding choice of representatives for the points $(a_{i1}: a_{i2}: a_{i3})$ in $\mathds{P}^2$ for $i=1,2,3$ is such that $(k_1, k_2, k_3)=(1,1,1)$ is the unique solution of (\ref{k}). 
\end{coro}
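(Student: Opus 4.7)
The plan is to translate the projective description (\ref{eqbit}) of the Aronhold bitangents into an algebraic identity for $a_{ij}$ by using (\ref{bitgrad}): there exists a projective transformation $\varphi$ of $\mathds{P}^2$ and nonzero scalars $h_i$ such that each linear form $f_i$ defining $\beta_i$ is $h_i$ times the pull-back under $\varphi$ of $\sum_j \frac{\partial\theta[q_i]}{\partial Z_j}(\tau,0)\, Z_j$. Under this identification, the relations (\ref{eqbit}) become two $3\times 3$ linear systems in the unknowns $h_i$ and $a_{ij}$: one coming from $f_4 = f_1+f_2+f_3$, and three coming from $f_{4+i}=\sum_l a_{il} f_l$ for $i=1,2,3$.

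Cramer's rule applied to the first system expresses each $h_j/h_4$ as a ratio of Jacobian determinants of the form (\ref{jacobians}). Substituting these into the remaining systems gives $a_{ij}$ as $\mu_i := h_{4+i}/h_4$ times a ratio of two such determinants. To pin down $\mu_i$, I would repeat the procedure starting from Riemann's model (Proposition \ref{Rmodel}): its linear relations among $f_4$, $f_{4+i}$ and the forms $\xi_{23}, \xi_{13}, \xi_{12}$ attached to $q_{23}, q_{13}, q_{12}$ yield, by the same Cramer argument, a parallel expression for $1/a_{ij}$ involving $\mu_i$ and the constants $k_i$ of (\ref{k}). Multiplying the two presentations of $a_{ij}$ and $1/a_{ij}$ eliminates the $a_{ij}$'s and produces $\mu_i^2$ as $1/k_i$ times a product of four Jacobian-determinant ratios.

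At this point Proposition \ref{jacobi} converts every Jacobian-determinant ratio into a product of three theta constants over three others. The symmetry of the Aronhold system forces most of the twenty-four factors appearing in $\mu_i^2$ to cancel in pairs, leaving $\theta^2(q_{4+i}+q_r+q_s)/\theta^2(q_4+q_r+q_s)$ up to a root of unity and the factor $1/k_i$. Extracting a square root (its ambiguity absorbed into $\epsilon_i$) gives $\mu_i$; substituting back into $a_{ij}=\mu_i\,D[q_{4+i},\cdot,\cdot]/D[q_4,\cdot,\cdot]$ and applying Proposition \ref{jacobi} once more to the remaining determinant ratio yields the stated formula after combining the phases. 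The main obstacle I expect is the bookkeeping of phase factors: each invocation of Proposition \ref{jacobi} contributes a character $\chi(q)={\bf e}((q_5+q_6+q_7)'\cdot q'')$, and one must check that these, together with $\sqrt{1/k_i}$ and any reduction signs from (\ref{reduction}), collapse—using the bilinearity of $\omega$ and the Aronhold expansion of $q_S$—into the single clean factor ${\bf e}(q'_j\cdot(q_4+q_{4+i})'')$, with all residual dependence on $i$ alone absorbed into $\eta_i$.
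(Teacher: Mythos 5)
Your argument retraces the proof of Weber's formula, which is exactly what the paper's own one-line proof of this corollary does: there $\epsilon_i$ arises as a fixed square root of $1/k_i$, so your relation $\mu_i^2=\tfrac{1}{k_i}\,e^{\pi{\bf i}(q_4+q_{4+i})'\cdot(q_4+q_5+q_6+q_7)''}\,\theta^2(q_{4+i}+q_r+q_s)/\theta^2(q_4+q_r+q_s)$ already contains everything needed. Just make the concluding step explicit---demanding that $\eta_i=\epsilon_i e^{\frac{\pi}{2}{\bf i}(q_4+q_{4+i})'\cdot(q_4+q_5+q_6+q_7)''}$ with $\epsilon_i$ a sign forces $\epsilon_i^2=1/k_i=1$, i.e.\ $k_i=1$---since that equivalence is the entire content of the corollary.
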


\begin{proof}
By the proof of Weber's formula one has $\eta_i=\sigma_i e^{\frac{\pi}{2}{\bf i} ( q_4+q_{4+i})' \cdot (q_4+q_5+q_6+q_7)''}$ where $\sigma_i$ is a non-zero factor that reduces to a sign for each $i=1,2,3$ if and only if $k_1=k_2=k_3=1$.
\end{proof}

\ni As an example, we can fix a system of coordinates for the quadratic forms as in (\ref{qintermsofq0}) and consider the following Aronhold system in terms of reduced characteristics:

\begin{equation*}
n_1= \begin{bmatrix}1 1 1 \\ 1 1 1 \end{bmatrix}; \, n_2=\begin{bmatrix} 0 0 1 \\ 0 1 1 \end{bmatrix}; n_3= \begin{bmatrix} 0 1 1\\ 0 0 1 \end{bmatrix}; n_4=\begin{bmatrix}1 0 1 \\ 1 0 0 \end{bmatrix}; n_5= \begin{bmatrix} 1 0 0 \\ 1 0 1 \end{bmatrix};n_6=\begin{bmatrix} 1 1 0 \\ 0 1 0 \end{bmatrix}; n_7=\begin{bmatrix} 0 1 0 \\ 1 1 0 \end{bmatrix};
\end{equation*}
\\
\ni Then, we can apply Weber's formula with the choice made in Corollary \ref{choicek1} and compute the reduction signs (see Remark \ref{reducedformula}):
\begin{equation*}
\begin{array}{ccc}
\rho_{11}= +1; & \, \rho_{21}=+1; &\, \rho_{31}= +1; \\
\rho_{12}= +1; & \, \rho_{22}=+1; &\, \rho_{32}= +1; \\
\rho_{13}= +1; & \, \rho_{23}=-1; &\, \rho_{33}= -1;
\end{array}
\end{equation*}
so as to obtain Weber's result (cf. \cite{Weber}):

\begin{equation*}
\begin{array}{ccc}
a_{11}= \epsilon_1 {\bf i} \frac{\theta{\begin{bmatrix}1 0 0 \\ 0 0 1 \end{bmatrix}} \theta{\begin{bmatrix}0 0 0 \\ 1 0 1 \end{bmatrix}}}{\theta{\begin{bmatrix}1 0 1 \\ 0 0 0 \end{bmatrix}} \theta{\begin{bmatrix}0 0 1 \\ 1 0 0 \end{bmatrix}}};  & a_{21}= \epsilon_2{\bf i} \frac{\theta{\begin{bmatrix}1 1 0 \\ 1 1 0 \end{bmatrix}} \theta{\begin{bmatrix}0 0 0 \\ 1 0 1 \end{bmatrix}}}{\theta{\begin{bmatrix}1 0 1 \\ 0 0 0 \end{bmatrix}} \theta{\begin{bmatrix}0 1 1 \\ 0 1 1 \end{bmatrix}}}; & a_{31}= - \epsilon_3 \frac{\theta{\begin{bmatrix}1 1 0 \\ 1 1 0 \end{bmatrix}} \theta{\begin{bmatrix}1 0 0 \\ 0 0 1 \end{bmatrix}}}{\theta{\begin{bmatrix}0 0 1 \\ 1 0 0 \end{bmatrix}} \theta{\begin{bmatrix}0 1 1 \\ 0 1 1 \end{bmatrix}}};\\
\\
a_{12}= \epsilon_1 {\bf i} \frac{\theta{\begin{bmatrix}0 1 0 \\ 1 0 1 \end{bmatrix}} \theta{\begin{bmatrix}1 1 0 \\ 0 0 1 \end{bmatrix}}}{\theta{\begin{bmatrix}0 1 1 \\ 1 0 0 \end{bmatrix}} \theta{\begin{bmatrix}1 1 1 \\ 0 0 0 \end{bmatrix}}};  & a_{22}= \epsilon_2 {\bf i} \frac{\theta{\begin{bmatrix}0 0 0 \\ 0 1 0 \end{bmatrix}} \theta{\begin{bmatrix}1 1 0 \\ 0 0 1 \end{bmatrix}}}{\theta{\begin{bmatrix}0 1 1 \\ 1 0 0 \end{bmatrix}} \theta{\begin{bmatrix}1 0 1 \\ 1 1 1 \end{bmatrix}}}; & a_{32}= \epsilon_3 \frac{\theta{\begin{bmatrix}0 0 0 \\ 0 1 0 \end{bmatrix}} \theta{\begin{bmatrix}0 1 0 \\ 1 0 1 \end{bmatrix}}}{\theta{\begin{bmatrix}1 1 1 \\ 0 0 0 \end{bmatrix}} \theta{\begin{bmatrix}1 0 1 \\ 1 1 1 \end{bmatrix}}};\\
\\
a_{13}= \epsilon_1 {\bf i} \frac{\theta{\begin{bmatrix}0 0 0 \\ 1 1 1 \end{bmatrix}} \theta{\begin{bmatrix}1 0 0 \\ 0 1 1 \end{bmatrix}}}{\theta{\begin{bmatrix}0 0 1 \\ 1 1 0 \end{bmatrix}} \theta{\begin{bmatrix}1 0 1 \\ 0 1 0 \end{bmatrix}}};  & a_{23}= \epsilon_2 {\bf i} \frac{\theta{\begin{bmatrix}0 1 0 \\ 0 0 0 \end{bmatrix}} \theta{\begin{bmatrix}1 0 0 \\ 0 1 1 \end{bmatrix}}}{\theta{\begin{bmatrix}0 0 1 \\ 1 1 0 \end{bmatrix}} \theta{\begin{bmatrix}1 1 1 \\ 1 0 1 \end{bmatrix}}}; & a_{33}= \epsilon_3 \frac{\theta{\begin{bmatrix}0 1 0 \\ 0 0 0 \end{bmatrix}} \theta{\begin{bmatrix}0 0 0 \\ 1 1 1 \end{bmatrix}}}{\theta{\begin{bmatrix}1 0 1 \\ 0 1 0 \end{bmatrix}} \theta{\begin{bmatrix}1 1 1 \\ 1 0 1 \end{bmatrix}}};\\

\end{array}
\end{equation*}

\begin{note}
The formula in (\ref{Aronholddet}) for the Aronhold system (\ref{eqbit}) is in accordance with the modular description of the universal matrix of bitangents obtained in \cite{F}. If a system of coordinates for the quadratic forms is fixed as in (\ref{qintermsofq0}), an Aronhold system $q_1, \dots ,  q_7$ such that  $q_0=\sum_{i=1}^7q_i$ is given by:
\begin{equation*}
n_1= \begin{bmatrix}1 1 1 \\ 1 1 1 \end{bmatrix}; \, n_2=\begin{bmatrix}1 1 0 \\ 1 0 0 \end{bmatrix}; n_3= \begin{bmatrix}1 0 1 \\ 0 0 1 \end{bmatrix}; n_4=\begin{bmatrix}1 0 0 \\ 1 1 0 \end{bmatrix}; n_5= \begin{bmatrix} 0 1 0 \\ 0 1 1 \end{bmatrix};n_6=\begin{bmatrix} 0 0 1 \\ 1 0 1 \end{bmatrix}; n_7=\begin{bmatrix} 0 1 1 \\ 0 1 0 \end{bmatrix};
\end{equation*}
\ni Then the first row of the universal bitangent matrix (cf. \cite{F}) gives the following modular expressions for the corresponding bitangents:
\begin{equation*}
\begin{array}{lll}
\beta'_1: \quad D[n_1+n_4, n_1+n_2, n_1+n_3]\sum_{j=1}^3 \restriction{\frac{\partial \theta_{n_1}}{\partial Z_j}}{, z=0} Z_j=0 & & \\
\beta'_2: \quad D[n_ 2+n_4 , n_1+n_2, n_2+n_3]\sum_{j=1}^3 \restriction{\frac{\partial \theta_{n_2}}{\partial Z_j}}{\, z=0} Z_j=0 & & \\
\beta'_3: \quad D[n_1, n_2, n_4]\sum_{j=1}^3 \restriction{\frac{\partial \theta_{n_3}}{\partial Z_j}}{\, z=0} Z_j=0 & & \\
\beta'_i: \quad D[n_1, n_2, n_3]\sum_{j=1}^3 \restriction{\frac{\partial \theta_{n_i}}{\partial Z_j}}{\, z=0} Z_j=0  & &  i=4, 5, 6, 7 \\
\end{array}
\end{equation*}
\ni where, as above, $D[n_i,n_j, n_k]:= {\rm grad}^0_z\theta_{n_i} \wedge {\rm grad}^0_z\theta_{n_j} \wedge {\rm grad}^0_z\theta_{n_k}$. A straightforward computation shows that the ordered collection of bitangents $\beta_1, \dots \beta_7$ given by Weber's formula is sent to the ordered collection $\beta'_1, \dots \beta'_7$ by the projective transformation $\phi: \mathds{P}^2 \rightarrow \mathds{P}^2$, defined by the matrix:
\begin{equation*}
A_{\phi}:= \begin{pmatrix} D[n_4, n_2, n_3 ]\restriction{\frac{\partial \theta_{n_1}}{\partial Z_1}}{\, z=0} &  D[n_1, n_4, n_3] \restriction{\frac{\partial \theta_{n_2}}{\partial Z_1}}{\, z=0}  & D[n_1, n_2, n_4 ]\restriction{\frac{\partial \theta_{n_3}}{\partial Z_1}}{\, z=0} \\   D[n_4, n_2, n_3] \restriction{\frac{\partial \theta_{n_1}}{\partial Z_2}}{\, z=0} & D[n_1, n_4, n_3]\restriction{\frac{\partial \theta_{n_2}}{\partial Z_2}}{\, z=0} & D[n_1, n_2, n_4 ]\restriction{\frac{\partial \theta_{n_3}}{\partial Z_2}}{\, z=0} \\  D[n_4, n_2, n_3]\restriction{\frac{\partial \theta_{n_1}}{\partial Z_3}}{\, z=0} & D[n_1, n_4, n_3]\restriction{\frac{\partial \theta_{n_2}}{\partial Z_3}}{\, z=0} &  D[n_1 , n_2, n_4]\restriction{\frac{\partial \theta_{n_3}}{\partial Z_3}}{\, z=0} \end{pmatrix}
\end{equation*}
\end{note}

\addcontentsline{toc}{chapter}{Bibliography}

\end{document}